\newtheorem{defi}{Definition}[section]
\newtheorem{proposition}[defi]{Proposition}
\newtheorem{rem}[defi]{Remark}
\newtheorem{lemma}[defi]{Lemma}
\newtheorem{theorem}[defi]{Theorem}
\newcommand{\defeq}{\mathrel{\mathrm{\raise0.1ex\hbox{:}\hbox{=}\strut}}}
\def\R{\mathbb R}
\def\NN{\mathbb{N}}
\def\E{\mathbb E}
\def\ee{\mathrm{e}}
\def\PP{\mathbb P}
\def\ve{\varepsilon}
\def\g{\widetilde{g}}
\def\la{\langle}
\def\ra{\rangle}
\def\A{A_{\lambda}}
\newcommand{\Norm}[1] {|\!|\!|#1|\!|\!|}
\begin{document}

\numberwithin{equation}{section}

\title[Invariant measures for SFDE's with superlinear drift terms ]{Invariant measures for stochastic functional differential equations with superlinear drift term}
\author[A. Es--Sarhir]{Abdelhadi Es--Sarhir$^{\ast\flat}$}
\author[O. van Gaans]{Onno van Gaans$^\ast$}
\author[M. Scheutzow]{Michael Scheutzow $^\flat$}
\address{Technische Universit\"at Berlin, Fakult\"at II, Institut f\"ur Mathematik, Sekr. Ma 7-5\newline Stra{\ss}e des 17. Juni 136, D-10623 Berlin, Germany}
\email{essarhir@math.tu-berlin.de} \email{ms@math.tu-berlin.de}
\address{Universiteit Leiden, Mathematisch Instituut, \newline Postbus 9512, 2300 RA Leiden, The Netherlands}

\email{vangaans@math.leidenuniv.nl}
\thanks{$^\star$Es--Sarhir and van Gaans acknowledge the support by a `VIDI subsidie' (639.032.510) of the Netherlands Organisation for Scientific Research (NWO)}
\thanks{$^\flat$Es--Sarhir and Scheutzow acknowledge support
from the DFG Forschergruppe 718 "Analysis and Stochastics in Complex
Physical Systems".}

\keywords{Stochastic functional differential equation, Feller
property, Tightness, Invariant measure}

\subjclass[2000]{35R60, 60H15, 60H20, 47D07}

\begin{abstract} We consider a stochastic functional differential
equation with an arbitrary Lipschitz diffusion coefficient depending
on the past. The drift part contains a term with superlinear growth
and satisfying a dissipativity condition. We prove tightness and
Feller property of the segment process to show existence of an
invariant measure.
\end{abstract}

\maketitle
\section{Introduction and preliminaries}
\noindent There have been quite some investigations on stationary
solutions of stochastic functional differential equations with
nonlinear diffusion coefficients, see for instance
\cite{bakhtin,chow,scheutzow} and references therein. One approach
is to rewrite the functional differential equation as a semilinear
infinite dimensional equation and use results on invariant measures
of such equations (see \cite{DaZa:96}). The operator induced by the
linear part of a functional differential equation is often not
dissipative. For results on invariant measures for non-dissipative
systems, see \cite{bonaccorsi,vangaans}. These results require that
the linear part is exponentially stable and that the Lipschitz
constant of the diffusion is small with respect to the decay of the
linear part. By means of a finite dimensional analysis it has been
shown that the Lipschitz constant of the diffusion coefficient may
be arbitrary large, provided the diffusion coefficient is uniformly
bounded (see \cite{reiss}).

\noindent In this paper we prove existence of an invariant measure
for stochastic functional differential equations with no boundedness
conditions on the diffusion coefficient nor conditions on the size
of its Lipschitz constant. Instead, we consider a stabilizing
feedback term in the drift with superlinear growth. Let $r>0$ and
denote by  $C([-r,0],\mathbb{R}^d)$ the space of $\mathbb{R}^d$ valued
continuous functions on $[-r,0]$ and let $g\colon
C([-r,0],\R^d)\to\mathbb{R}^d$ and $h\colon
C([-r,0],\R^{d}) \to \mathbb{R}^{d\times m}$ be Lipschitz functions with respect
to the maximum norm. Let
$(B(t))_{t\ge 0}$ denote a standard $\R^m$-valued Brownian motion
defined on a filtered probability space
$(\Omega,\mathcal{F},(\mathcal{F}_t)_t,\mathbb{P})$. We will show
existence of an invariant measure for the functional differential
equation
\begin{equation}\label{sdde00} \mathrm{d}x(t)=\Big( -x(t)\cdot|x(t)|^s +g(x_t)\Big)\mathrm{d}t + h(x_t)\mathrm{d}B(t),\quad t\ge 0,\end{equation}
where $s>0$ and $x_t$ denotes the segment of $x$ given by
\[x_t(\theta)=x(t+\theta),\quad \theta\in [-r,0].\]

\noindent In order to show existence of an invariant measure, we
consider the segments of a solution. In contrast to the scalar
solution process, the process of segments is a Markov process. We
show that the process of segments is also Feller and that there
exists a solution of which the segments are tight. Then we apply the
Krylov-Bogoliubov method.

\noindent Since the segment process has values in the infinite
dimensional space $C([-r,0],\R^d)$, boundedness in probability does
not automatically imply tightness. For solution processes of
infinite dimensional equations, one often uses compactness of the
orbits of the underlying deterministic equation to obtain tightness.
For an infinite dimensional formulation of the functional
differential equation, however, such a compactness property does not
hold.

\noindent Our proof of tightness involves a Lyapunov function
technique to obtain boundedness in probability for the segment
process $(x_t)_{t\geq 0}$. Further we use the assumption on the
coefficients for the deterministic part, and Kolmogorov's criterion
for the noise part. By using a monotonicity argument we prove the
Feller property for $(x_t)_{t\geq 0}$ which implies the existence of
an invariant measure by the Krylov-Bogoliubov Theorem. Our analysis
holds true for the more general equation
\begin{equation}\label{sde0}
 \left\{
\begin{array}{ll}
dx(t)=\Big(f(x(t))+g(x_t)\Big)dt+h(x_t)dB(t),\quad\mbox{for}\:\:
t\geq0,\\
x(s)=\varphi(s)\:\:\mbox{for}\:\: s\in[-r,0],
\end{array}
\right.
\end{equation}
where we assume the following hypotheses:
\begin{enumerate}

\item [${\bf (H_0)}$] $f:\:\R^d\rightarrow\R^d$ is continuous and
$$
\lim\limits_{|v|\to +\infty} \frac{\la f(v),v\ra}{|v|^2}=-\infty.
$$
\item [${\bf (H_1)}$]  $g:\:C([-r,0],\mathbb{R}^d)\rightarrow\R^d$,  $h:\:C([-r,0],\mathbb{R}^d)\rightarrow\R^{d\times m}$
are continuous and bounded on bounded subsets of $C([-r,0],\R^d)$.

\item [${\bf (H_2)}$] There exists a positive constant
$L$ such that for all $x$, $y\in C([-r,0],\mathbb{R}^d) $
\begin{equation*}
\begin{split}
\Big(2\la f(x(0))-f(y(0)),x(0)-y(0)\ra^{+}&+2\la
g(x)-g(y),x(0)-y(0)\ra^{+}\\&+\Norm{ h(x)-h(y)}^2\Big)\leq L \|x-y\|^2,
\end{split}
\end{equation*}
where $\Norm M :=(\mathrm{Tr} (M M^*))^{1/2}$ denotes the trace norm of the matrix $M$.
\end{enumerate}
\noindent  The initial process $\varphi$ has almost surely
continuous paths and is independent of $(B(t))_{t\geq 0}$ with
$\E\|\varphi(\cdot,\omega)\|^p<\infty$ for all $p\geq 2$.

\noindent Note that under hypotheses ${\bf (H_0)} $, ${\bf (H_1)}$
and ${\bf (H_2)} $ and thanks to \cite[Theorem 2.3]{RS}, equation
\eqref{sde0} has a unique global solution given by

$$
x(t)=x(0)+\int_0^tf(x(s))\:ds+\int_0^tg(x_s)\:ds+\int_0^th(x_s)\:dB(s)\quad
\mbox{for any $t>0$}.
$$

\noindent We will prove existence of an invariant measure $\mu$ for
the segment process $(x_t)_{t\geq0}$ associated to the solution
$x(t)_{t\geq 0}$. Of course our hypotheses ${\bf(H_1)}$ and
${\bf(H_2)}$ allow the coefficient $h$ to be degenerate which can
not guarantee uniqueness of $\mu$. For recent results on the
uniqueness of invariant measures for stochastic functional
differential equations, see \cite{HMS}.

\noindent We end this introduction by the
following elementary remark which is useful for our arguments in the
sequel of this paper.

\begin{rem}
Let $T>0$. Consider a stochastic process $x(t)$, $-r \le t \le T$ with continuous paths
and let $x_t$, $t\geq0$ be its associated segment process on
$[-r,0]$. If $x_0=\varphi$ and $p \ge 1$, then
\begin{equation*}
\E\sup\limits_{0\leq t\leq T}\|x_t\|^p\leq
\E\|\varphi\|^p+\E\sup\limits_{0\leq t\leq T}|x(t)|^p
\end{equation*}
\end{rem}
\begin{proof}
We have
\begin{equation*}
\begin{split}
\E\sup\limits_{0\leq t\leq T}\|x_t\|^p&=\E\sup\limits_{0\leq t\leq
T}\sup\limits_{-r\leq s\leq 0}|x(t+s)|^p\\
&=\E\sup\limits_{0\leq t\leq T}\sup\limits_{t-r\leq s\leq
t}|x(s)|^p\\
&=\E\sup\limits_{-r\leq s\leq T}|x(s)|^p\leq
\E\|\varphi\|^p+\E\sup\limits_{0\leq s\leq T}|x(s)|^p.
\end{split}
\end{equation*}
\end{proof}

\section{Tightness of the segment process $(x_t)_{t\geq 0}$ }
\noindent In this section we will prove tightness of the family
$\{x_t:\, t\geq 0\}$. To this end we shall prove first boundedness
in probability.

\noindent We fix the initial process $\varphi$ and consider the
solution of \eqref{sde0}.
\begin{proposition}\label{Beschr.Masse}
Under hypotheses ${\bf(H_0)}$,  ${\bf(H_1)}$ and  ${\bf(H_2)}$ the process $(x_t)_{t\geq 0}$
is bounded in probability.
\end{proposition}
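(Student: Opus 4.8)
The goal is to show that $\{x_t : t\ge 0\}$ is bounded in probability, i.e. that for every $\ve>0$ there is $R>0$ with $\sup_{t\ge0}\PP(\|x_t\| > R) \le \ve$. In view of the Remark at the end of Section 1 it suffices to control $\sup_{t-r\le s\le t}|x(s)|$ uniformly in $t$; the natural route is a Lyapunov/Itô-formula estimate for $V(v) = |v|^2$ applied to $x(t)$. The key structural feature we would exploit is hypothesis ${\bf(H_0)}$: for every $N>0$ there is $K_N$ such that $\la f(v),v\ra \le -N|v|^2 + K_N$ for all $v\in\R^d$. Applying Itô's formula to $|x(t)|^2$ gives
\[
\dd|x(t)|^2 = \Big(2\la f(x(t)),x(t)\ra + 2\la g(x_t),x(t)\ra + \Norm{h(x_t)}^2\Big)\dd t + 2\la x(t), h(x_t)\dd B(t)\ra .
\]
The first plan would be to bound the drift: using ${\bf(H_0)}$ for the $f$-term, and Young's inequality on $2\la g(x_t),x(t)\ra \le |x(t)|^2 + |g(x_t)|^2$, and similarly absorbing $\Norm{h(x_t)}^2$. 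The difficulty is that $g(x_t)$ and $h(x_t)$ depend on the whole segment, so $|g(x_t)|^2$ and $\Norm{h(x_t)}^2$ are not controlled by $|x(t)|^2$ alone — only by $\|x_t\|^2 = \sup_{\theta\in[-r,0]}|x(t+\theta)|^2$. This is the main obstacle: one has a delay term on the right-hand side, so a naive Gronwall argument does not close.

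**Handling the delay.** To get around this I would proceed in two stages. Since ${\bf(H_1)}$ only gives boundedness on bounded sets (not a global linear-growth bound), I would first need an \emph{a priori} moment bound: show $\sup_{t\ge0}\E|x(t)|^2 < \infty$ (and ideally $\sup_{t\ge0}\E\|x_t\|^2<\infty$). For this, take expectations in the Itô formula to kill the martingale term, giving
\[
\frac{\dd}{\dd t}\E|x(t)|^2 \le -2N\,\E|x(t)|^2 + 2K_N + \E\big(2\la g(x_t),x(t)\ra + \Norm{h(x_t)}^2\big).
\]
Here is where one must be careful about the growth of $g,h$. I suspect the intended reading is that a linear-growth consequence of Lipschitzness of the original coefficients is in force (the abstract speaks of Lipschitz $g,h$), so that $|g(x_t)|^2 + \Norm{h(x_t)}^2 \le C(1 + \|x_t\|^2)$; then using $\la g(x_t),x(t)\ra \le \tfrac{N}{2}|x(t)|^2 + \tfrac{1}{2N}|g(x_t)|^2$ one arrives at
\[
\frac{\dd}{\dd t}\E|x(t)|^2 \le -N\,\E|x(t)|^2 + C'\big(1 + \E\|x_t\|^2\big).
\]
Choosing $N$ large enough and feeding in $\E\|x_t\|^2 \le \E\|\varphi\|^2 + \sup_{0\le s\le t}\E|x(s)|^2$ (a consequence of the Remark, applied to the running supremum of the second moment rather than its expectation — this needs a small separate argument, e.g. via Doob/BDG applied on each interval of length $r$), one can set up a Gronwall-type inequality for $\psi(t) := \sup_{0\le s\le t}\E|x(s)|^2$ and conclude $\sup_{t\ge0}\E|x(t)|^2 =: M < \infty$.

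**From uniform moments to boundedness in probability.** Once $\sup_{t\ge0}\E|x(t)|^2\le M$ is established, I would upgrade to a uniform bound on $\E\sup_{t-r\le s\le t}|x(s)|^2$: apply the Itô formula on the window $[t-r,t]$, take the supremum over $s\in[t-r,t]$, use the Burkholder–Davis–Gundy inequality on the martingale part (the quadratic variation is $\int 4|x(u)|^2\Norm{h(x_u)}^2\dd u$, controlled by the already-known second moments after a Young split), and absorb the resulting $\tfrac12\E\sup_{t-r\le s\le t}|x(s)|^2$ term into the left-hand side. This yields $\sup_{t\ge0}\E\sup_{t-r\le s\le t}|x(s)|^2 \le M'<\infty$, hence by the Remark $\sup_{t\ge0}\E\|x_t\|^2\le \E\|\varphi\|^2 + M'$. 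Boundedness in probability is then immediate from Chebyshev: $\PP(\|x_t\|>R)\le R^{-2}(\E\|\varphi\|^2 + M')$, uniformly in $t$. I expect the genuinely delicate points to be (i) justifying the self-referential moment estimate $\E\|x_t\|^2 \le \E\|\varphi\|^2 + \sup_{s\le t}\E|x(s)|^2$-type bound with the running supremum inside, which requires the BDG step on length-$r$ subintervals and a discrete Gronwall over these subintervals, and (ii) making sure the superlinearity in ${\bf(H_0)}$ is used to choose the dissipativity rate $N$ larger than whatever linear-growth constant the delay terms contribute — it is precisely the "$\to-\infty$" in ${\bf(H_0)}$ that lets us win this competition no matter how large the Lipschitz constant of $h$ is.
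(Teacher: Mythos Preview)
Your plan has the right shape --- It\^o on $|x(t)|^2$, exploit $(H_0)$ to get an arbitrarily strong linear dissipation rate $N$, then close a recursive estimate over windows of length $r$ --- but the closing step contains a genuine gap. The differential inequality you write,
\[
\frac{\dd}{\dd t}\E|x(t)|^2 \le -N\,\E|x(t)|^2 + C'(1+\E\|x_t\|^2),
\]
involves $\E\|x_t\|^2=\E\sup_{\theta}|x(t+\theta)|^2$, not $\sup_{\theta}\E|x(t+\theta)|^2$, and the inequality between these goes the wrong way. You acknowledge this and propose to repair it by BDG on length-$r$ windows, but that does not break the circularity: BDG on $[t-r,t]$ for the martingale $\int 2\la x,h(x_\cdot)\dd B\ra$ yields a contribution of order $\E\int_{t-r}^{t}\Norm{h(x_u)}^2\dd u\lesssim r\,(1+\sup_u\E\|x_u\|^2)$, with a constant depending on $r$ and the Lipschitz constant of $h$ but \emph{not} on $N$. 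So after absorbing, you arrive at an inequality of the type $\E\sup_{t-r\le s\le t}|x(s)|^2 \le C_0 + C_1\,\E\sup_{t-2r\le s\le t}|x(s)|^2$ with $C_1$ fixed by $r,L,h(0)$; there is no parameter left to make $C_1<1$, and the recursion does not close. Increasing $N$ only damps the drift, not the stochastic integral, which is precisely where the delay term re-enters.

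The paper's proof resolves exactly this point with an additional ingredient you are missing: after applying variation of constants with rate $\lambda$, the stochastic part becomes a \emph{damped} convolution $\int_0^t\ee^{-\lambda(t-s)}\eta(s)\,\dd\beta(s)$, and a factorization estimate (Lemma~\ref{stoch-fact}) shows
\[
\E\sup_{0\le t\le T}\Big|\int_0^t\ee^{-\lambda(t-s)}\eta(s)\,\dd\beta(s)\Big|^p \le a_{p,\lambda}\,\E\int_0^T|\eta(s)|^p\dd s,\qquad a_{p,\lambda}\xrightarrow[\lambda\to\infty]{}0.
\]
This is the device that lets the superlinearity of $f$ (i.e.\ the freedom to take $\lambda$ large) beat the stochastic term as well as the drift term. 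The paper then works with the Lyapunov functional $V(\zeta)=\sup_{-r\le s\le 0}\ee^{3s}|\zeta(s)|^3$ applied to $z=|x|^2$ (so sixth moments; $p=3>2$ is needed for the factorization lemma) and obtains a genuine contraction $\E V(z_r)\le\delta\,\E V(\psi)+\rho$ with $\delta<1$, which iterates to $\sup_{t\ge0}\E\|x_t\|^6<\infty$. Your outline would become correct if you replaced the naive BDG step by this damped-convolution estimate (or an equivalent tool that makes the stochastic contribution small as $\lambda\to\infty$).
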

\noindent For the proof of the proposition we need some preparation.
Let $\eta\colon [0,\infty)\times\Omega\to \R$ be
 a progressively measurable process with locally square integrable sample paths.
Consider a one-dimensional Brownian motion $(\beta (t) )_{t\ge 0}$ and for $\mu>0$ let us
introduce the following equation
\begin{equation*}
 \left\{
\begin{array}{ll}
dv(t)=-\mu v(t)dt+\eta(t,\omega)d\beta(t),\quad
t\geq0\\
v(0)=0.
\end{array}
\right.
\end{equation*}

\noindent If we denote by $(v_{\mu}(\cdot))$ its solution we have
$$
v_{\mu}(t)=\int_0^t\ee^{-\mu(t-s)}\eta(s,\omega)\:d\beta(s).
$$

\noindent The following lemma gives an estimate for the process
$v_{\mu}(\cdot)$.
\begin{lemma}\label{stoch-fact}
For $2<p<+\infty$ and $\mu>0$, there exists a positive
constant $a_{p,\mu}$ such that
$$\lim\limits_{\mu\to +\infty}a_{p,\mu}= 0$$ and
\begin{equation}\label{estimate}
\E\sup\limits_{0\leq t\leq T}|v_{\mu}(t)|^p\leq a_{p,\mu}\cdot
\E\int_0^T|\eta(s,\omega)|^p\:ds , \mbox{ for every }T>0.
\end{equation}
\end{lemma}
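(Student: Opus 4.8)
The plan is to use the factorization method for the stochastic convolution defining $v_\mu$. Throughout, fix $p\in(2,\infty)$ and $\mu>0$, and assume without loss of generality that $\E\int_0^T|\eta(s,\omega)|^p\:ds<\infty$, since otherwise \eqref{estimate} is trivial; recall also that $v_\mu$ has continuous paths, being the solution of an Ornstein--Uhlenbeck type equation. The crucial choice is a parameter $\alpha\in(1/p,1/2)$, which exists precisely because $p>2$. One starts from the elementary identity, valid for $0\le s<t$,
\[
\ee^{-\mu(t-s)}=\frac{\sin(\pi\alpha)}{\pi}\int_s^t\ee^{-\mu(t-\sigma)}(t-\sigma)^{\alpha-1}\ee^{-\mu(\sigma-s)}(\sigma-s)^{-\alpha}\:d\sigma,
\]
which follows from the Beta integral $\int_s^t(t-\sigma)^{\alpha-1}(\sigma-s)^{-\alpha}\:d\sigma=\Gamma(\alpha)\Gamma(1-\alpha)=\pi/\sin(\pi\alpha)$. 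Plugging this into $v_\mu(t)=\int_0^t\ee^{-\mu(t-s)}\eta(s,\omega)\:d\beta(s)$ and applying the stochastic Fubini theorem (legitimate since $\alpha<1/2$ makes $(\sigma-s)^{-\alpha}$ square integrable in $s$ and since $\E\int_0^T|\eta|^p<\infty$) yields the representation
\[
v_\mu(t)=\frac{\sin(\pi\alpha)}{\pi}\int_0^t\ee^{-\mu(t-\sigma)}(t-\sigma)^{\alpha-1}\,Y(\sigma)\:d\sigma,\qquad Y(\sigma):=\int_0^\sigma\ee^{-\mu(\sigma-s)}(\sigma-s)^{-\alpha}\eta(s,\omega)\:d\beta(s).
\]

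Next I would treat the outer, purely deterministic convolution. Let $q=p/(p-1)$. Because $\alpha>1/p$ we have $(\alpha-1)q>-1$, so H\"older's inequality gives, for every $t\le T$,
\[
|v_\mu(t)|^p\le\Big(\tfrac{\sin\pi\alpha}{\pi}\Big)^p\Big(\int_0^\infty\ee^{-\mu q u}u^{(\alpha-1)q}\:du\Big)^{p/q}\int_0^T|Y(\sigma)|^p\:d\sigma=\Big(\tfrac{\sin\pi\alpha}{\pi}\Big)^p C_\mu^{\,p/q}\int_0^T|Y(\sigma)|^p\:d\sigma,
\]
where $C_\mu=(\mu q)^{(1-\alpha)q-1}\Gamma((\alpha-1)q+1)$; since $\alpha>1/p$ forces $(1-\alpha)q<1$, the exponent of $\mu$ here is strictly negative, so $C_\mu\to0$ as $\mu\to\infty$. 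Taking the supremum over $t\le T$ (the right-hand side no longer depends on $t$) and then expectations reduces matters to bounding $\E\int_0^T|Y(\sigma)|^p\:d\sigma$.

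For that, fix $\sigma$, apply the Burkholder--Davis--Gundy (moment) inequality to the martingale $Y(\sigma)$, and then H\"older's inequality, splitting the weight $w(s)=\ee^{-2\mu(\sigma-s)}(\sigma-s)^{-2\alpha}$ as $w=w^{(p-2)/p}\cdot w^{2/p}$ with conjugate exponents $p/(p-2)$ and $p/2$:
\[
\E|Y(\sigma)|^p\le c_p\,\E\Big(\int_0^\sigma w(s)|\eta(s,\omega)|^2\:ds\Big)^{p/2}\le c_p\,D_\mu^{(p-2)/2}\,\E\int_0^\sigma w(s)|\eta(s,\omega)|^p\:ds,
\]
with $D_\mu=\int_0^\infty\ee^{-2\mu u}u^{-2\alpha}\:du=\Gamma(1-2\alpha)(2\mu)^{2\alpha-1}$, finite since $\alpha<1/2$ and tending to $0$ as $\mu\to\infty$. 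Integrating over $\sigma\in[0,T]$, applying (ordinary) Fubini, and using $\int_s^T\ee^{-2\mu(\sigma-s)}(\sigma-s)^{-2\alpha}\:d\sigma\le D_\mu$ gives $\E\int_0^T|Y(\sigma)|^p\:d\sigma\le c_p\,D_\mu^{\,p/2}\,\E\int_0^T|\eta(s,\omega)|^p\:ds$. Combining the three displays establishes \eqref{estimate} with $a_{p,\mu}=c_p\big(\tfrac{\sin\pi\alpha}{\pi}\big)^pC_\mu^{\,p/q}D_\mu^{\,p/2}$, and $a_{p,\mu}\to0$ as $\mu\to\infty$ because both $C_\mu$ and $D_\mu$ do. The main point requiring care is the bookkeeping of powers of $\mu$: one must check that the window $\alpha\in(1/p,1/2)$ (the source of the hypothesis $p>2$) simultaneously makes both Gamma integrals $C_\mu$ and $D_\mu$ finite and forces the exponents $(1-\alpha)q-1$ and $2\alpha-1$ to be negative; once that is verified, the stochastic Fubini step and the H\"older estimates are routine.
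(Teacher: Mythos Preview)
Your proof is correct and follows essentially the same route as the paper: both use the factorization method with a parameter $\alpha\in(1/p,1/2)$, estimate the outer deterministic convolution by H\"older to extract a negative power of $\mu$, and control the auxiliary process $Y$ via the Burkholder--Davis--Gundy inequality. The only cosmetic difference is that the paper invokes Young's convolution inequality to bound $\E\int_0^T|Y(\sigma)|^p\,d\sigma$, whereas you split the weight via H\"older and then apply Fubini; both arguments yield the same factor $D_\mu^{p/2}$ and hence the same constant $a_{p,\mu}$ up to harmless multiplicative factors.
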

\begin{proof}
Fix $2<p<\infty$, $T>0$ and assume that $\E \int_0^T|\eta(s,\omega)|^p\:ds < \infty$.
Let $\frac 1p <\alpha<\frac 12$ and define
$$
y(t)\defeq
\int_0^t(t-s)^{-\alpha}\ee^{-\mu(t-s)}\eta(s,\omega)\:d\beta(s), \quad
t\ge 0.
$$
Using the factorization formula (see \cite[Sect. 7.1]{DPZ1})
$$
\int_0^t
\ee^{-\mu(t-s)}\eta(s,\omega)\:d\beta(s)=\frac{\sin\pi\alpha}{\pi}R_{\alpha}y(t)
$$
where
$$
R_{\alpha}f(t)=\int_0^t(t-s)^{\alpha-1}\ee^{-\mu(t-s)}f(s)\:ds
$$

\noindent defines a bounded linear operator from $L^p([0,T],\R)$
into $C([0,T],\R)$. Indeed, take a function $f$ in $ L^p([0,T],\R)$,
then we have
\begin{equation*}
\begin{split}
|R_{\alpha}f(t)|&\leq
\int_0^t(t-s)^{\alpha-1}\ee^{-\mu(t-s)}|f(s)|\:ds\\
&\leq \|f\|_{L^p([0,T],\R)}\left(
\int_0^t(t-s)^{(\alpha-1)p/(p-1)}\ee^{-\mu
p(t-s)/(p-1)}\:ds\right)^{\frac{p-1}{p}}\\
&\leq \|f\|_{L^p([0,T],\R)}\left(
\int_0^{+\infty}s^{(\alpha-1)p/(p-1)}\ee^{-\mu
ps/(p-1)}\:ds\right)^{\frac{p-1}{p}}\\
&=\|f\|_{L^p([0,T],\R)}\left(\frac{p-1}{\mu p}\right)^{\alpha-\frac
1p}\Gamma \left( \frac{\alpha p-1}{p-1}\right)^{1-\frac 1p}.
\end{split}
\end{equation*}

\noindent Therefore
\begin{equation*}
\begin{split}
\E\left(\sup\limits_{0\leq t\leq
T}\left|\int_0^T\ee^{-\mu(t-s)}\eta(s,\omega)\:d\beta(s)\right|^p\right)^{\frac
1p}&=\E\left(\sup\limits_{0\leq t\leq
T}\left|\frac{\sin\pi\alpha}{\pi}R_{\alpha}y(t)\right|^p\right)^{\frac 1p}\\
&\leq
\|R_{\alpha}\|\left(\E\|y(\cdot)\|_{L^p([0,T],\R)}^p\right)^{\frac
1p}\\ &\leq \left(\frac{p-1}{\mu p}\right)^{\alpha-\frac 1p}\Gamma
\left( \frac{\alpha p-1}{p-1}\right)^{1-\frac 1p}
\left(\E\|y(\cdot)\|_{L^p([0,T],\R)}^p\right)^{\frac 1p}.
\end{split}
\end{equation*}

\noindent Using Burkholder-Davis-Gundy's inequality we obtain
\begin{equation*}
\begin{split}
\E\|y\|_{L^p([0,T],\R)}^p&=\E\int_0^T|y(t)|^p\:dt\\
&=\int_0^T\E|\int_0^t(t-s)^{-\alpha}\ee^{-\mu(t-s)}\eta(s,\omega)\:d\beta(s)|^p\:dt\\
&\leq
c_p\E\int_0^T\left(\int_0^t(t-s)^{-2\alpha}|\ee^{-\mu(t-s)}\eta(s,\omega)|^2\:ds\right)^{\frac
p2}\:dt\\
(\mbox{Young's inequality} )&\leq
c_p\left(\int_0^Ts^{-2\alpha}\ee^{-2\mu s}\:ds\right)^{\frac
p2}\cdot \E\int_0^T|\eta(s,\omega)|^p\:ds\\
&\leq c_p \left(\frac{1}{2\mu}
\int_0^{+\infty}\left(\frac{t}{2\mu}\right)^{-2\alpha}e^{-t}\:dt\right)^{\frac
p2}\cdot\E\int_0^T|\eta(s,\omega)|^p\:ds.
\end{split}
\end{equation*}
Hence we have
\begin{equation*}
\begin{split} \E\|y\|^p_{L^p([0,T],\R)}&\leq
c_p\cdot\left(\frac{1}{(2\mu)^{1-2\alpha}}\Gamma(1-2\alpha)\right)^{\frac p2}\E\int_0^T|\eta(s,\omega)|^p\:ds\\
&= c_{p,\mu}\cdot \E\int_0^T|\eta(s,\omega)|^p\:ds,
\end{split}
\end{equation*}
where $c_{p,\mu}\defeq c_p
\left(\frac{1}{(2\mu)^{1-2\alpha}}\Gamma(1-2\alpha)\right)^{\frac
p2} $. Therefore we deduce
\begin{equation*}
\E\left(\sup\limits_{0\leq t\leq
T}\left|\int_0^t\ee^{-\mu(t-s)}\eta(s,\omega)\:d\beta(s)\right|^p\right)\leq
a_{p,\mu}\E\int_0^T|\eta(s,\omega)|^p\:ds,
\end{equation*}

\noindent where $$ a_{p,\mu}\defeq c_{p,\mu}\cdot
\left(\frac{p-1}{\mu p}\right)^{p\alpha-1}\Gamma \left( \frac{\alpha
p-1}{p-1}\right)^{p-1}.$$
\end{proof}
\noindent We are now in the position to complete the proof of the
proposition.
\begin{proof}
\noindent Let $\lambda \ge 1$. For $x\in \R^d$ we define
$$
R_{\lambda}(x)\defeq 2\la f(x),x\ra+\lambda |x|^2.
$$

\noindent  By hypothesis ${\bf (H_0)}$ there exists $\A>0$ such that
$$
\frac{\la f(x),x\ra}{|x|^2}\leq -\lambda,\quad |x|\geq \A.
$$

\noindent Again by ${\bf (H_0)}$ we can find $B\geq 0$ independent
of $\lambda$ such that
\begin{equation}\label{R-estimate}
R_{\lambda}(x)\leq B+\lambda \A^2\quad\mbox{for all $x\in \R^d$} .
\end{equation}

\noindent We now consider the solution $x(\cdot)$ of equation
\eqref{sde0} and  set $z(t)\defeq |x(t)|^2$, $t\geq 0$. Then
It\^{o}'s formula implies that for fixed $t\geq 0$ we have
\begin{equation}
\begin{split}
dz(t)&=2\la f(x(t)),x(t)\ra dt+2\la g(x_t),x(t)\ra dt+\Norm{h(x_t)}^2
dt+2\la x(t),h(x_t)dB(t)\ra\\
&=\Big( -\lambda z(t)+R_{\lambda}(x(t))+2\la g(x_t),x(t)\ra
dt+\Norm{h(x_t)}^2\Big) dt+2\la x(t),h(x_t)dB(t)\ra\\
&\leq\Big( -\lambda z(t)+R_{\lambda}(x(t))+2\la
g(x_t)-g(0),x(t)\ra+2\la
g(0),x(t)\ra\\&\qquad+2\Norm{h(x_t)-h(0)}^2+2\Norm{h(0)}^2\Big)dt+2\la x(t),h(x_t)dB(t)\ra\\
&\leq \Big( -\lambda z(t)+R_{\lambda}(x(t))+2L\|x_t\|^2+2\la
g(0),x(t)\ra+2\|h(0)\|^2\Big)dt+2\la x(t),h(x_t)dB(t)\ra\\
&\leq   \Big(-\lambda z(t)+R_{\lambda}(x(t))+3L\|x_t\|^2+\frac
1L|g(0)|^2+2\|h(0)\|^2\Big)dt+2\la x(t),h(x_t)dB(t)\ra,
\end{split}
\end{equation}

\noindent where we used the estimate $$\la g(0),x(t)\ra\leq
\frac{L}{2}|x(t)|^2+\frac{1}{2L}|g(0)|^2\leq
\frac{L}{2}\|x_t\|^2+\frac{1}{2L}|g(0)|^2.$$

\noindent

\noindent Set $D\defeq \frac 1L|g(0)|^2+2\|h(0)\|^2 $, so the
variation of constants formula yields
\begin{equation*}
\begin{split}
z(t)&\leq z(0)\ee^{-\lambda t}+\int_0^t
\ee^{-\lambda(t-s)}\Big(R_{\lambda}(x(s))+3L\|x_s\|^2+D\Big)\:ds+2\int_0^t\ee^{-\lambda(t-s)}\la
x(s),h(x_s)dB(s)\ra\\
& \leq z(0)\ee^{-\lambda
t}+\A^2+\frac{B+D}{\lambda}+\frac{3L}{\lambda}\sup\limits_{-r\leq
s\leq t}|x(s)|^2+2\int_0^t\ee^{-\lambda(t-s)}\la x(s),h(x_s)dB(s)\ra.
\end{split}
\end{equation*}

\noindent There exists a one-dimensional Brownian motion $\beta$ with respect to the same filtration
such that
\begin{equation*}
\la x(s),h(x_s)dB(s)\ra = \eta(s,\omega)\,d\beta(s),
\end{equation*}
where
\begin{equation*}
\eta(s,\omega)= \Big(\sum_{j=1}^m \big(\sum_{i=1}^d x_i(s) h_{ij}(x_s)\big)^2\Big)^{1/2}.
\end{equation*}
By ${\bf (H_2)}$, we get
\begin{equation}\label{eta}
|\eta(s,\omega)|^3 \le |x(s)|^3\Norm{h(x_s)}^3 \le 4 \: |x(s)|^3 \big( L^{3/2}\|x_s\|^3 + D^{3/2} \big).
\end{equation}

\noindent Hence for $0 \le t\leq r$ we obtain
\begin{equation*}
\ee^tz(t)\leq
z(0)+\ee^r\Big(\A^2+\frac{B+D}{\lambda}\Big)+\frac{3L}{\lambda}\ee^r\sup\limits_{-r\leq
s\leq t}|x(s)|^2+2\ee^{r}\sup\limits_{0\leq t\leq
r}\Big|\int_0^t\ee^{-\lambda (t-s)} \eta(s)\, d\beta(s)\Big|.
\end{equation*}

\noindent Now using Lemma \ref{stoch-fact} and \eqref{eta} we get
\begin{equation*}
\begin{split}
\E\sup\limits_{0\leq t\leq r}\Big|\int_0^t \ee^{-\lambda(t-s)}
\eta(s) d\beta(s)\Big|^3&\leq a_{3,\lambda}\: r \: \E\|\eta_r\|^3\\
&\leq 4\:a_{3,\lambda}\: r \: \Big( D^{3/2} \E\|x_r\|^3 + L^{3/2} \big(\E\|x_r\|^6 + \E(\|\varphi\|^3\|x_r\|^3)\big)\Big)\\
&\leq 2\:a_{3,\lambda}\: r \Big(D^{3/2} \big(\E\|x_r\|^6 + 1\big) +  L^{3/2} \big(3\,\E\|x_r\|^6 + \E\|\varphi\|^6\big)\Big).
\end{split}
\end{equation*}

%

\noindent If we choose $\kappa\in (1,\ee^{3r})$ and
$\gamma>1$ such that $(a+b+c+d)^3\leq \kappa
a^3+\gamma (b^3+ c^3+ d^3)$ for all $a,\:b,\:c,\:d\geq 0$
we have

\begin{equation*}
\begin{split}
\E\sup\limits_{0\leq t\leq r}|\ee^tz(t)|^3&\leq \kappa
\E|z(0)|^3+\gamma
\ee^{3r}\Big(\A^2+\frac{B+D}{\lambda}\Big)^3+\gamma\frac{27L^3}{\lambda^3}\ee^{3r}\big(\E \|\varphi\|^6 + \E\sup\limits_{0\leq
s\leq r}|\ee^s z(s)|^3\big)\\&+\gamma 16\:a_{3,\lambda}\: r\:\ee^{3r}
\Big( D^{3/2} \big(\E\sup\limits_{0\leq t\leq r}|\ee^{t}z(t)|^3+1\big)
+ L^{3/2} \big(3\E\sup\limits_{0\leq t\leq r}|\ee^{t}z(t)|^3 + \E\|\varphi\|^6\big)\Big).
\end{split}
\end{equation*}

\noindent Let $\psi(s):=|\varphi(s)|^2$, $s \in [-r,0]$. We define the function $V:\:C([-r,0],\R)\rightarrow
\R^+$ by  $$V(\zeta)\defeq \sup\limits_{-r\leq s\leq
0}(\ee^{3s}|\zeta(s)|^3). $$

\noindent We deduce from the above calculation that
\begin{equation}
\begin{split}
\E V(z_r)&\leq \kappa \ee^{-3r} \E V(\psi)+\gamma
\Big(\A^2+\frac{B+D}{\lambda}\Big)^3+\gamma\frac{27L^3}{\lambda^3}\ee^{3r}\big(\E
V(\psi)+\E V(z_r)\big)\\&+16\,\gamma\:a_{3,\lambda}\: r \Big(\E V(z_r) \ee^{3r}\big( D^{3/2}+3\,L^{3/2}\big) +  \ee^{3r}L^{3/2}\E V(\psi)+
D^{3/2}\Big).
\end{split}
\end{equation}

\noindent Hence, for $\lambda_{\ast}$ sufficiently large, we get
\begin{equation}\label{V-estimate}
\begin{split}
\E V(z_r)&\leq \delta \E V(\psi)+\rho,
\end{split}
\end{equation}
where
\begin{eqnarray*}
\delta&\defeq&\frac{\kappa
\ee^{-3r}+\gamma\frac{27L^3}{\lambda_{\ast}^3}\ee^{3r}+16\gamma\:a_{3,\lambda_{\ast}}\:
r\ee^{3r}L^{3/2}}{1-\gamma \ee^{3r}\big(\frac{27L^3}{\lambda_{\ast}^3}+16\:a_{3,\lambda_{\ast}}\:
r\,(D^{3/2}+3L^{3/2})\big)}<1,\\
\rho&\defeq&\frac{\gamma
\Big(A_{\lambda_{\ast}}^2+\frac{B+D}{\lambda_{\ast}}\Big)^3+16\gamma\:a_{3,\lambda_{\ast}}\:
r\,D^{3/2}}{1-\gamma \ee^{3r}\big(\frac{27L^3}{\lambda_{\ast}^3}+16\:a_{3,\lambda_{\ast}}\:
r\,(D^{3/2}+3L^{3/2})\big)},
\end{eqnarray*}
provided that $\E V(z_r)<\infty$ ($\E V(\psi)$ is finite by assumption). To see that this property holds,
apply the previous calculation to the process $|x(t)|$ stopped as soon as it reaches level $N$ and then
let $N \to \infty$.
Iterating \eqref{V-estimate} we get
\begin{equation}\label{Iteration}
\E V(z_{kr})\leq \delta^k\E V(\psi)+\frac{\rho}{1-\delta}\leq \E
V(\psi)+\frac{\rho}{1-\delta},\quad\mbox{for all $k\in\NN$}.
\end{equation}

\noindent Let $t\geq 0$. Then there exists $k\in\NN_0$ such that
$kr\leq t\leq (k+1)r$ and we have
\begin{equation} \label{spliting}
\E\|z_t\|^3\leq \E\|z_{kr}\|^3+\E\|z_{(k+1)r}\|^3.
\end{equation}

\noindent Using \eqref{Iteration} we obtain
$$
\E\|z_{kr}\|^3=\E\sup\limits_{-r\leq s\leq 0}|z_{kr}(s)|^3\leq
\ee^{3r}\E V(z_{kr})\leq \ee^{3r} \Big(\E
V(\psi)+\frac{\rho}{1-\delta}\Big).
$$

\noindent Combining this with \eqref{spliting} yields
\begin{equation}\label{moment-estimate}
\sup\limits_{t\geq 0}\E\|x_t\|^6<+\infty.
\end{equation}

\noindent This implies the boundedness in probability of the segment
process $(x_t)_{t\geq 0}$ and the proposition is proved.
\end{proof}

\noindent The following theorem is our main result in this section.

\begin{theorem}
Under hypotheses ${\bf(H_0)}$, ${\bf(H_1)}$,  ${\bf(H_2)}$ the
family $\{\mathcal{L}(x_t),\; t\geq 0\}$ is tight.
\end{theorem}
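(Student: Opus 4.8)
The plan is to write, for $t\ge r$, the segment $x_t$ as a sum of three $C([-r,0],\R^d)$-valued random variables and to check that each of the three resulting families is tight, using that the image of a product of compact sets under the (continuous) addition map $C([-r,0],\R^d)^3\to C([-r,0],\R^d)$ is compact. From the integral form of \eqref{sde0} on the window $[t-r,t]$ one has, for $\theta\in[-r,0]$,
\begin{equation*}
x_t(\theta)=x(t-r)+A_t(\theta)+\mathcal M_t(\theta),\qquad
A_t(\theta):=\int_{t-r}^{t+\theta}\!\big(f(x(s))+g(x_s)\big)\,ds,\quad
\mathcal M_t(\theta):=\int_{t-r}^{t+\theta}\!h(x_s)\,dB(s),
\end{equation*}
where $A_t,\mathcal M_t\in C([-r,0],\R^d)$ and $A_t(-r)=\mathcal M_t(-r)=0$. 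Since $|x(t-r)|\le\|x_{t-r}\|$, Proposition \ref{Beschr.Masse} shows that $\{x(t-r)\}_{t\ge r}$ is bounded in probability, hence $\{\mathcal L(x(t-r)\ein)\}_{t\ge r}$ (Dirac-type family of constant functions) is tight.

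For $\mathcal M_t$ I would invoke Kolmogorov's criterion. By \eqref{moment-estimate}, $\sup_{s\ge0}\E\|x_s\|^6<\infty$, and ${\bf(H_2)}$ applied with $y\equiv0$ gives $\Norm{h(x_s)}\le L^{1/2}\|x_s\|+\Norm{h(0)}$, so $\sup_{s\ge0}\E\Norm{h(x_s)}^4<\infty$ by Jensen. For $-r\le\theta_1\le\theta_2\le0$ the Burkholder--Davis--Gundy and Jensen inequalities then produce a constant $C$, independent of $t$, with
\begin{equation*}
\E\big|\mathcal M_t(\theta_2)-\mathcal M_t(\theta_1)\big|^4
=\E\Big|\int_{t+\theta_1}^{t+\theta_2}\!h(x_s)\,dB(s)\Big|^4
\le c\,(\theta_2-\theta_1)\,\E\!\int_{t+\theta_1}^{t+\theta_2}\!\Norm{h(x_s)}^4\,ds
\le C\,(\theta_2-\theta_1)^2 .
\end{equation*}
As $\mathcal M_t(-r)=0$ and $2>1$, Kolmogorov's tightness criterion yields that $\{\mathcal L(\mathcal M_t)\}_{t\ge r}$ is tight in $C([-r,0],\R^d)$.

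The term $A_t$ is the main obstacle: ${\bf(H_0)}$ gives no polynomial upper bound on $|f|$, so estimating $\E|x(v)-x(u)|^p$ directly fails, and ${\bf(H_1)}$ only controls $f,g$ on bounded sets; the remedy is to localize on an event that is large uniformly in $t$. Put $K_t:=\sup_{t-r\le s\le t}\big(|f(x(s))|+\Norm{g(x_s)}\big)$. For $s\in[t-r,t]$ one has $|x(s)|\le\|x_t\|$ and $\|x_s\|\le\|x_{t-r}\|\vee\|x_t\|$, so on $\{\|x_{t-r}\|\vee\|x_t\|\le N\}$ we get $K_t\le\sup_{|w|\le N}|f(w)|+\sup_{\|\zeta\|\le N}\Norm{g(\zeta)}=:\kappa_N<\infty$ by continuity of $f$ and ${\bf(H_1)}$. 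By Proposition \ref{Beschr.Masse}, $\PP(\|x_{t-r}\|\vee\|x_t\|>N)\le2\sup_{s\ge0}\PP(\|x_s\|>N)\to0$ as $N\to\infty$ uniformly in $t\ge r$, hence $\{K_t\}_{t\ge r}$ is bounded in probability; on $\{K_t\le\bar\kappa\}$ the function $A_t$ is $\bar\kappa$-Lipschitz with $A_t(-r)=0$, so it lies in the compact set $\{\zeta\in C([-r,0],\R^d):\zeta(-r)=0,\ \mathrm{Lip}(\zeta)\le\bar\kappa\}$, and $\{\mathcal L(A_t)\}_{t\ge r}$ is tight. Adding the three tight families shows $\{\mathcal L(x_t)\}_{t\ge r}$ is tight. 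Finally, for $0\le t\le r$ one uses $\E\sup_{0\le s\le r}\|x_s\|^6<\infty$ (obtained inside the proof of Proposition \ref{Beschr.Masse}) together with the a.s.\ uniform continuity on $[-r,r]$ of the single path $s\mapsto x(s)$, which provides a common, $t$-independent modulus of continuity for all segments $x_t$, $0\le t\le r$; this makes $\{\mathcal L(x_t)\}_{0\le t\le r}$ tight as well, and combining the two ranges of $t$ finishes the proof.
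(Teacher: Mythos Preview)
Your argument is correct and follows essentially the same route as the paper: both control the stochastic integral via Burkholder--Davis--Gundy plus Kolmogorov's criterion using the uniform moment bound \eqref{moment-estimate}, and both handle the drift by localizing on the high-probability event $\{\|x_{t-r}\|\vee\|x_t\|\le R\}$ (the paper uses $\|x_t\|,\|x_{t+r}\|$), on which ${\bf(H_1)}$ makes the integrand bounded. The only difference is organizational---you split $x_t$ additively into three tight families, whereas the paper verifies the uniform modulus-of-continuity condition \eqref{hoelder} directly---but the substance is the same.
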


\begin{proof}
 From
\eqref{moment-estimate} we have in particular the boundedness in
probability of the finite dimensional process $(x(t))_{t\geq 0}$ and
hence the family $\{\mathcal{L}(x(t)),\; t\geq 0\}$ is tight. To
prove the theorem, it is sufficient to show that
\begin{equation}\label{hoelder}
\lim\limits_{\delta\to 0}\sup\limits_{t\geq 0}\PP\left(
\sup\limits_{{\stackrel{t\leq u\leq v\leq t+r}{v-u\leq
\delta}}}|x(v)-x(u)|\geq \gamma\right)=0\quad \mbox{for any
$\gamma>0$}.
\end{equation}

\noindent To shorten notation let
$$
\g(\eta)\defeq g(\eta)+f(\eta(0)),\quad \eta\in C([-r,0],\R^d).
$$
\noindent Thus we can write

$$
x(t)=x(0)+\int_0^t \g(x_s)\,\mathrm{d}s + \int_0^t
h(x_s)\,\mathrm{d}B(s)
$$

\noindent and we have
\begin{equation}
\begin{split}
\PP\left( \sup\limits_{{\stackrel{t\leq u\leq v\leq t+r}{v-u\leq
\delta}}}|x(v)-x(u)|\geq \gamma\right)&\leq \PP\left(
\sup\limits_{{\stackrel{t\leq u\leq v\leq t+r}{v-u\leq
\delta}}}\int_u^v|\g(x_s)|\:ds\geq \frac\gamma 2\right)\\&+\PP\left(
\sup\limits_{{\stackrel{t\leq u\leq v\leq t+r}{v-u\leq
\delta}}}\Big|\int_u^v h(x_s)\:dB(s)\Big|\geq \frac\gamma 2\right)\\
&=M_t+N_t.
\end{split}
\end{equation}
\noindent Let $\ve$, $R>0$. For the term $M_t$ we have
\begin{equation*}
\begin{split}
M_t\leq  \PP&\left( \sup\limits_{{\stackrel{t\leq u\leq v\leq
t+r}{v-u\leq \delta}}}\int_u^v|\g(x_s)|\:ds\geq \frac\gamma 2
\:\Big|\:\|x_t\|\leq R,\:\|x_{t+r}\|\leq R \right)\\&+\PP\Big(\{
\|x_t\|>R\}\Big)+\PP\Big(\{ \|x_{t+r}\|>R\}\Big).
\end{split}
\end{equation*}

\noindent Since the process $(x_t)_{t\geq 0}$ is bounded in
probability we can choose $R$ so large such that
$$
\PP\Big(\{ \|x_t\|>R\}\Big)+\PP\Big(\{ \|x_{t+r}\|>R\}\Big)\leq
\frac\ve 2\quad \mbox{for all $t\geq 0$}.
$$

\noindent By ${\bf(H_1)}$, $\g(x_s)$, $s\in [t-r,t+r]$ is bounded
on the set $\{\|x_t\|\leq R\}\cap\{\|x_{t+r}\|\leq R\}$, so it follows
that there exists $\delta_0>0$ such that
$$
\PP\left( \sup\limits_{{\stackrel{t\leq u\leq v\leq t+r}{v-u\leq
\delta}}}\int_u^v|\g(x_s)|\:ds\geq \frac\gamma 2
\:\Big|\:\|x_t\|\leq R,\:\|x_{t+r}\|\leq R \right)=0\quad\mbox{for
any $\delta<\delta_0$}.
$$

\noindent Therefore we get
$$
\lim\limits_{\delta\to 0} \sup\limits_{t\ge 0} M_t=0.
$$

\noindent For the term $N_t$ we define
$$
J(t)\defeq \int_0^t h(x_s)\:dB(s).
$$

\noindent Using Burkholder's inequality and ${\bf(H_2)}$, we get
\begin{equation*}
\begin{split}
\E|J(t)-J(s)|^{6}&=\E\Big|\int_{s}^{t}h(x_u)\:dB(u)\Big|^{6}\\
&\leq c\, \E\Big(\int_{s}^{t}\Norm{h(x_u)}^2\,du\Big)^{3}\\
\mbox{(Jensen's inequality)}\quad &\leq \bar c\, |t-s|^{3} \big(\sup\limits_{u\geq 0}\E\|x_u\|^{6}+1\big),
\end{split}
\end{equation*}
where $\bar c$ depends on $L$ and $D$.
Using \eqref{moment-estimate} and Kolmogorov's
tightness criterion (see \cite[2.4.11]{KS} or \cite{S09}) we infer that
$$
\lim\limits_{\delta\to 0}\sup\limits_{t\ge 0}N_t=
\lim\limits_{\delta\to 0}\sup\limits_{t\ge 0}\PP\left(
\sup\limits_{{\stackrel{t\leq u\leq v\leq t+r}{v-u\leq
\delta}}}\Big|\int_u^v h(x_s)\:dB(s)\Big|\geq \frac\gamma
2\right)=0.
$$

\noindent This establishes \eqref{hoelder} and the proof is
complete.
\end{proof}

\section{invariant measures}
\noindent In this section we discuss the existence of an invariant
measure $\mu$ for the segment process $(x_t)_{t\geq 0}$. Since in
the last section we proved tightness of this process, in order to
apply Krylov-Bogoliubov's theorem we need to prove the Feller
property of $(x_t)_{t\geq 0}$.
\begin{proposition}
Assume hypotheses ${\bf(H_0)}$,  ${\bf(H_1)}$ and  ${\bf(H_2)}$.
Let $(\varphi_m)_{m\in\NN}$ be a sequence in $C([-r,0],\R^d)$ such
that $\varphi_m\xrightarrow[m\to +\infty]{\|\cdot\|} \varphi$. Let
$x^m$ (resp. $x$) be the solutions to \eqref{sde0} with initial
condition $\varphi_m$ (resp. $\varphi$). Then for any $t>0$,
\begin{equation}
\E\sup\limits_{t-r\leq s\leq t}|x^m(s)-x(s)|^4\rightarrow 0\quad
\mbox{as}\:\:m\to+\infty.
\end{equation}

\noindent In particular, $(x_t)_{t\geq 0}$ is a Feller process.
\end{proposition}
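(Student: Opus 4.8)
The plan is to control the difference process $w(t)\defeq x^m(t)-x(t)$ by an It\^o estimate followed by Gronwall's lemma, the point being that hypothesis ${\bf(H_2)}$ is tailored precisely so that the superlinear drift does no harm. Applying It\^o's formula to $|w(t)|^2$ and using that in ${\bf(H_2)}$ the inner products occur with a positive part (so that the actual inner products are dominated by their positive parts), the drift contribution of $f$ and $g$ together with the trace term $\Norm{h(x^m_t)-h(x_t)}^2$ is bounded by $L\|w_t\|^2$; hence
\[
\sup_{0\le u\le t}|w(u)|^2\le|w(0)|^2+L\int_0^t\|w_s\|^2\,ds+\sup_{0\le u\le t}|M(u)|,
\]
where $M(t)\defeq 2\int_0^t\langle w(s),(h(x^m_s)-h(x_s))\,dB(s)\rangle$ is a local martingale whose quadratic variation satisfies $d[M]_s\le 4|w(s)|^2\Norm{h(x^m_s)-h(x_s)}^2\,ds\le 4L\,|w(s)|^2\|w_s\|^2\,ds$, again by ${\bf(H_2)}$.

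Set $Z(t)\defeq\sup_{-r\le s\le t}|w(s)|^2$, so that $Z(0)=\|\varphi_m-\varphi\|^2$ and $\|w_s\|^2\le Z(s)$ for all $s$. From the last display, $Z(t)\le\|\varphi_m-\varphi\|^2+L\int_0^tZ(s)\,ds+\sup_{0\le u\le t}|M(u)|$; squaring, using $(a+b+c)^2\le 3(a^2+b^2+c^2)$ and $(\int_0^tZ(s)\,ds)^2\le t\int_0^tZ(s)^2\,ds$, and taking expectations yields
\[
\E Z(t)^2\le 3\|\varphi_m-\varphi\|^4+3L^2t\int_0^t\E Z(s)^2\,ds+3\,\E\sup_{0\le u\le t}|M(u)|^2 .
\]
By the Burkholder--Davis--Gundy inequality and the bound on $[M]$, $\E\sup_{0\le u\le t}|M(u)|^2\le c\,\E[M]_t\le 4cL\int_0^t\E Z(s)^2\,ds$, so with $K\defeq 3L^2t+12cL$ we obtain $\E Z(t)^2\le 3\|\varphi_m-\varphi\|^4+K\int_0^t\E Z(s)^2\,ds$.

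To make the above rigorous one must know a priori that $\E Z(t)^2<\infty$; I would therefore first carry out the computation with both solutions stopped at $\tau_N\defeq\inf\{t\ge 0:\,|x^m(t)|\vee|x(t)|\ge N\}$, which gives a deterministic bound on $Z(t\wedge\tau_N)$ and the inequality $\E Z(t\wedge\tau_N)^2\le 3\|\varphi_m-\varphi\|^4+K\int_0^t\E Z(s\wedge\tau_N)^2\,ds$ with $K$ independent of $N$. Gronwall's lemma then gives $\E Z(t\wedge\tau_N)^2\le 3\|\varphi_m-\varphi\|^4\,\ee^{Kt}$, and since the solutions of \eqref{sde0} are global (so $\tau_N\to\infty$ a.s.) monotone convergence yields $\E Z(t)^2\le 3\|\varphi_m-\varphi\|^4\,\ee^{Kt}$. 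As $Z(t)^2=\sup_{-r\le s\le t}|w(s)|^4\ge\sup_{t-r\le s\le t}|x^m(s)-x(s)|^4$ and $\|\varphi_m-\varphi\|\to 0$, the claimed convergence follows. Finally, for the Feller property write the transition semigroup of the segment process as $P_tF(\psi)=\E[F(x_t^\psi)]$, where $x^\psi$ solves \eqref{sde0} started from $\psi\in C([-r,0],\R^d)$; the convergence just shown implies $x_t^{\varphi_m}\to x_t^\varphi$ in probability, so for bounded continuous $F$ one has $F(x_t^{\varphi_m})\to F(x_t^\varphi)$ in probability, and as these random variables are uniformly bounded, dominated convergence gives $P_tF(\varphi_m)\to P_tF(\varphi)$; hence $P_tF$ is continuous and $(x_t)_{t\ge 0}$ is Feller.

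The main obstacle is the superlinear drift: no global Lipschitz bound on $f$ is available, so one cannot estimate $\langle f(x^m(t))-f(x(t)),w(t)\rangle$ directly; the whole argument hinges on reading ${\bf(H_2)}$ correctly --- it bounds a sum of \emph{positive parts} of inner products plus $\Norm{h(x)-h(y)}^2$, which is exactly the combination produced by It\^o's formula applied to $|w|^2$. The only other point requiring care is the a priori integrability needed before Gronwall, handled by the localization at $\tau_N$ described above.
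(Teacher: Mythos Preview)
Your argument is essentially the paper's own proof: apply It\^o to $|x^m-x|^2$, use ${\bf(H_2)}$ to bound the drift plus trace term by $L\|x^m_t-x_t\|^2$, square, control the martingale supremum via the quadratic variation (bounded by $4L\int_0^t\|x^m_s-x_s\|^4\,ds$), and close with Gronwall to obtain $\E\|x^m_t-x_t\|^4\le 3\|\varphi_m-\varphi\|^4\exp(3L^2t^2+12Lt)$. Your proof is in fact slightly more careful than the paper's, which omits the localization step you include to justify the a~priori finiteness needed for Gronwall.
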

\begin{proof}
Using It\^{o}'s formula we can write
\begin{equation}
\begin{split}
d|x^m(t)-x(t)|^2&=2\la
f(x^m(t))-f(x(t))+g(x^m_t)-g(x_t),x^m(t)-x(t)\ra
dt\\&+\Norm{h(x_t^m)-h(x_t)}^2dt+dM(t),
\end{split}
\end{equation}

\noindent where $$ M(t)\defeq \int_0^t 2\la
x^m(s)-x(s),(h(x_s^m)-h(x_s))\:dB(s)\ra$$ is a martingale with
quadratic variation process bounded by
$4L\int_0^t\|x^m_s-x_s\|^4\:ds$. Thus if we define
$M^{\ast}(t)=\sup\limits_{s\leq t} M(s)$ we obtain
$$
\|x^m_t-x_t\|^2\leq
\|\varphi_m-\varphi\|^2+L\int_0^t\|x^m_s-x_s\|^2\:ds+M^{\ast}(t).
$$

\noindent This implies
\begin{equation}
\begin{split}
\E \|x^m_t-x_t\|^4&\leq
3\E\left(\|\varphi_m-\varphi\|^4+L^2\Big(\int_0^t\|x^m_s-x_s\|^2\:ds\Big)^2+\Big(M^{\ast}(t)\Big)^2\right)\\
&\leq 3
\left(\|\varphi_m-\varphi\|^4+L^2t\int_0^t\E\|x^m_s-x_s\|^4\:ds+4L\int_0^t\E\|x^m_t-x_t\|^4\:ds\right).
\end{split}
\end{equation}

\noindent Hence, by Gronwall's inequality, we obtain
$$
\E \|x^m_t-x_t\|^4\leq 3\|\varphi_m-\varphi\|^4\ee^{12Lt+3L^2t^2}.
$$

\noindent This implies in particular that for
$\psi:\:C([-r,0],\R^d)\rightarrow\R^d$ bounded and continuous we
have
$$
\lim\limits_{m\to+\infty}\E\psi((x_m)_t)=\E\psi(x_t)\quad\mbox{for
any $t>0$},
$$
which yields the Feller property.
\end{proof}

\noindent Now, by the Krylov-Bogoliubov Theorem (see Sect.3.1 in
\cite{DaZa:96}) we have the following result.
\begin{theorem}
Under hypotheses ${\bf(H_0)}$,  ${\bf(H_1)}$ and  ${\bf(H_2)}$ the
segment process $(x_t)_{t\geq 0}$ corresponding to \eqref{sde0} has
an invariant measure.
\end{theorem}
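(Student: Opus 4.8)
The plan is to combine the two ingredients established earlier in the paper: the tightness of the family $\{\mathcal{L}(x_t): t\geq 0\}$ proved in Section~2 and the Feller property of the segment process proved in the preceding proposition, and then feed them into the Krylov--Bogoliubov machinery exactly as in \cite[Sect.~3.1]{DaZa:96}. The only genuine point to verify is that the segment process $(x_t)_{t\geq0}$ associated to \eqref{sde0} is a \emph{time-homogeneous Markov process} on $C([-r,0],\R^d)$, so that the abstract theorem applies; this is standard for solutions of functional SDEs with coefficients satisfying $\mathbf{(H_0)}$--$\mathbf{(H_2)}$ (uniqueness gives the Markov property via the usual flow argument), and I would either cite it or sketch it in one line.

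Concretely, first I would fix an initial segment $\varphi\equiv\zeta\in C([-r,0],\R^d)$ (deterministic) and let $(P_t)_{t\geq0}$ denote the transition semigroup of $(x_t)_{t\geq0}$, i.e.\ $P_t\psi(\zeta)=\E_\zeta\psi(x_t)$ for $\psi\in C_b(C([-r,0],\R^d))$. The Feller property proved in the proposition says exactly that $P_t$ maps $C_b$ into $C_b$. Second, define the time-averaged measures
\begin{equation*}
\mu_T\defeq \frac{1}{T}\int_0^T \mathcal{L}(x_t)\dd t,\qquad T>0,
\end{equation*}
where here $x_t$ is the segment started from a fixed initial process $\varphi$ as in the paper (its law is well defined since $\mathcal{L}(x_t)$ is a Borel probability measure on $C([-r,0],\R^d)$ for each $t$). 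Third, I would invoke tightness: by the Theorem of Section~2 the family $\{\mathcal{L}(x_t):t\geq0\}$ is tight, hence for every $\ve>0$ there is a compact $K\subset C([-r,0],\R^d)$ with $\mathcal{L}(x_t)(K)\geq 1-\ve$ for all $t\geq0$, and therefore $\mu_T(K)\geq 1-\ve$ for all $T>0$; thus $\{\mu_T:T>0\}$ is tight and, by Prokhorov's theorem, relatively compact in the weak topology.

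Fourth, extract a sequence $T_n\to\infty$ with $\mu_{T_n}\Rightarrow\mu$ for some probability measure $\mu$ on $C([-r,0],\R^d)$. Fifth, I would check invariance: for any $\psi\in C_b$ and any fixed $s>0$, the Feller property gives $P_s\psi\in C_b$, so
\begin{equation*}
\int P_s\psi\dd\mu = \lim_{n\to\infty}\int P_s\psi\dd\mu_{T_n}
= \lim_{n\to\infty}\frac{1}{T_n}\int_0^{T_n}\E\psi(x_{t+s})\dd t,
\end{equation*}
and the last average differs from $\frac{1}{T_n}\int_0^{T_n}\E\psi(x_t)\dd t=\int\psi\dd\mu_{T_n}$ by a boundary term bounded by $\frac{2s\|\psi\|_\infty}{T_n}\to0$; hence $\int P_s\psi\dd\mu=\int\psi\dd\mu$ for all $\psi\in C_b$ and all $s>0$, which is precisely the statement that $\mu$ is invariant for $(P_t)$. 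This is the content of the Krylov--Bogoliubov theorem, so in the write-up I would simply cite it once tightness and the Feller property are in hand.

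The main (and really only) obstacle is the Markov/semigroup setup rather than any estimate: one must be sure that $(x_t)_{t\geq0}$ is genuinely a homogeneous Markov process on the infinite-dimensional state space $C([-r,0],\R^d)$ and that the transition kernels are well defined and measurable, so that $\mathcal{L}(x_t)$, $P_t$, and the averaged measures $\mu_T$ all make sense. Given the global existence and uniqueness quoted from \cite[Theorem~2.3]{RS}, this follows by the standard argument, and everything else is a direct application of Prokhorov plus the averaging identity above.
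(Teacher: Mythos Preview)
Your proposal is correct and follows exactly the paper's approach: having established tightness of $\{\mathcal{L}(x_t):t\ge 0\}$ in Section~2 and the Feller property in the preceding proposition, the paper simply invokes the Krylov--Bogoliubov theorem from \cite[Sect.~3.1]{DaZa:96} without further argument. Your write-up merely unpacks that citation (and rightly flags the Markov property of the segment process as the one structural prerequisite), so there is nothing to add.
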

\begin{rem}
Our proofs show that hypothesis ${\bf (H_0)} $ can be weakened by
requiring that\\ $\limsup\limits_{|v|\to +\infty}\frac{\la
f(v),v\ra}{|v|^2}<-\lambda$, for $\lambda$ sufficiently large
positive constant (which depends on $L,\,r$ and $h(0)$).
\end{rem}

\end{document}